\title{Affine-Orthogonal Manifolds and Deformation to Levi-Civita Connections}
\author{
  Mihail Cocos\thanks{Department of Mathematics, Weber State University: mihailcocos@weber.edu} 
  \
}
\date{\today}
\theoremstyle{definition}
\newtheorem{definition}{Definition}[section]
\newtheorem{example}{Example}[section]
\newtheorem{remark}{Remark}[section]
\theoremstyle{plain}
\newtheorem{theorem}{Theorem}[section]
\begin{document}
\maketitle

\begin{abstract}
We study a class of affine manifolds equipped with a flat affine connection $\nabla$ and a global Riemannian metric $g$ that is diagonal in local affine coordinates. These structures are closely related to \emph{Hessian manifolds}, where the metric locally arises as the Hessian of a smooth potential. For example, the Hopf manifold $(\mathbb{R}^{n+1}\setminus \{0\}) / \langle x \mapsto 2x \rangle$ with metric $g = (\sum_i x_i^2)^{-1} \sum_i dx_i^2$ admits a proper deformation of $\nabla$ into its Levi-Civita connection. By Theorem 2.3 in \cite{cocos2025}, such deformations force the Euler characteristic to vanish, providing evidence for Chern's conjecture. The geometry of these manifolds is reminiscent of the work of Yau on affine and Hessian structures \cite{cheng_yau_1986}.
\end{abstract}

\section{Introduction}
Affine-orthogonal manifolds are a subclass of affine manifolds equipped with a flat, torsion-free affine connection and a global metric that is diagonal in local affine coordinates. These manifolds resemble \emph{Hessian manifolds}, which are smooth manifolds endowed with a Riemannian metric locally expressible as the Hessian of a potential function in affine coordinates \cite{shima1997, yilmaz2008}. 

Hessian manifolds have been studied extensively in affine differential geometry, including in the work of Cheng and Yau on complete Kähler manifolds and affine structures \cite{cheng_yau_1986}. Both Hessian and affine-orthogonal manifolds share the property that the connection preserves a symmetric bilinear form, and in the compact case, this leads to strong topological constraints, such as vanishing Euler characteristic \cite{cocos2025, liu2025}. Studying these manifolds provides insight into Chern's conjecture and the topology of compact affine manifolds.

\subsection*{Examples:}
\begin{example}[Hopf manifold with diagonal metric]
Let us consider the Hopf manifold $M = (\mathbb{R}^{n+1}\setminus \{0\}) / \Gamma$, where 
\begin{equation}\label{eq:hopf-action}
\Gamma = \langle \phi \rangle, \qquad \phi(x) = 2 x.
\end{equation}
That is, $M$ is obtained by identifying points in $\mathbb{R}^{n+1}\setminus \{0\}$ that differ by powers of $2$ under scalar multiplication.

Define the function 
\begin{equation}\label{eq:hopf-f}
f(x) = \sum_{i=1}^{n+1} x_i^2
\end{equation}
and the metric on $\mathbb{R}^{n+1} \setminus \{0\}$:
\begin{equation}\label{eq:hopf-metric}
g = \frac{1}{f(x)} \sum_{i=1}^{n+1} dx_i^2.
\end{equation}

The action of $\Gamma$ scales $f(x)$ multiplicatively, and hence the metric $g$ is invariant under $\phi$ up to pullback:
\begin{equation}\label{eq:hopf-metric-invariant}
\phi^* g = g.
\end{equation}

Consequently, $g$ descends to a well-defined Riemannian metric on the quotient manifold $M$. 

\textbf{Discussion:} In the standard Euclidean coordinates $(x_1,\dots,x_{n+1})$ on the universal cover, $g$ is diagonal. That is, its matrix representation is
\begin{equation}\label{eq:hopf-diagonal-matrix}
[g] = \frac{1}{f(x)} \text{diag}(1,1,\dots,1),
\end{equation}
so $M$ is an example of an \emph{affine-orthogonal manifold} where the global metric is diagonal in coordinates descending from the universal cover.

\end{example}

\begin{example}[One-Parameter Family of Non-Equivalent Flat Connections on $\mathbb{T}^n$]
Let $\mathbb{T}^n$ have angular coordinates $(\theta_1,\dots,\theta_n)$ and global vector fields
\begin{equation}\label{eq:Ei-def}
E_i = \frac{\partial}{\partial \theta_i}, \quad i = 1, \dots, n.
\end{equation}

Define a symmetric connection
\begin{equation}\label{eq:connection-def}
\nabla^k_{E_i} E_i = k E_i, \qquad \nabla^k_{E_i} E_j = 0 \text{ for } i\neq j.
\end{equation}

\textbf{Step 1: Curvature computation.}  
The curvature tensor is
\begin{equation}\label{eq:curvature-def}
R^k(E_i,E_j)E_\ell = \nabla^k_{E_i} \nabla^k_{E_j} E_\ell - \nabla^k_{E_j} \nabla^k_{E_i} E_\ell.
\end{equation}

Consider all cases for $\ell$:

\begin{itemize}
\item If $\ell \neq i,j$, then $\nabla^k_{E_i} E_\ell = \nabla^k_{E_j} E_\ell = 0$, so $R^k(E_i,E_j)E_\ell = 0$.
\item If $\ell = j \neq i$, then $\nabla^k_{E_j} E_j = k E_j$, $\nabla^k_{E_i} E_j = 0$, so
\begin{equation}\label{eq:curvature-case-j}
R^k(E_i,E_j) E_j = \nabla^k_{E_i} (k E_j) - 0 = k \nabla^k_{E_i} E_j = 0.
\end{equation}
\item If $\ell = i \neq j$, by symmetry $R^k(E_i,E_j) E_i = 0$.
\item If $\ell = i = j$, both terms in \eqref{eq:curvature-def} are equal, so the difference is zero.
\end{itemize}

Hence the curvature vanishes:
\begin{equation}\label{eq:curvature-zero}
R^k \equiv 0 \quad \forall k \in \mathbb{R}.
\end{equation}

\textbf{Step 2: Local coordinate frame.}  
Define
\begin{equation}\label{eq:Fi-def}
F_i = \theta_i^{-1} E_i
\end{equation}
(locally, away from $\theta_i = 0$). The Lie brackets are
\begin{equation}\label{eq:Fi-commute}
[F_i, F_j] = 0 \quad \forall i,j,
\end{equation}
so $\{F_i\}$ defines a local coordinate frame.

\textbf{Step 3: Diagonal metric in local coordinates.}  
The standard flat metric on the torus is
\begin{equation}\label{eq:g-standard}
g_{\text{std}} = \sum_{i=1}^n d\theta_i^2.
\end{equation}
Expressing it in terms of the local coordinates induced by $F_i$, we have
\begin{equation}\label{eq:g-diagonal-local}
d\theta_i = \theta_i \, d\tilde{\theta}_i \quad \Rightarrow \quad
g_{\text{std}} = \sum_{i=1}^n (\theta_i d\tilde{\theta}_i)^2,
\end{equation}
which is diagonal.

\textbf{Step 4: Non-equivalence.}  
Different values of $k$ produce non-conjugate holonomy representations, so the connections $\nabla^k$ are pairwise non-affinely equivalent.
\end{example}

\section{Main Result: Deformation to Levi-Civita Connection}
Let $(M,\nabla,g)$ be an affine-orthogonal manifold, and let $D$ denote the Levi-Civita connection of $g$. Define
\begin{equation}\label{eq:nabla-t}
\nabla^t = (1-t)\nabla + t D, \quad t \in [0,1].
\end{equation}

\begin{theorem}
For every $t\in [0,1]$, the connection $\nabla^t$ is locally metric: there exists a local Riemannian metric $g^t$ such that 
\begin{equation}\label{eq:nabla-t-metric-condition}
\nabla^t g^t = 0.
\end{equation}
\end{theorem}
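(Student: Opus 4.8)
The plan is to work in local affine coordinates $(x^1,\dots,x^n)$, in which the flat connection $\nabla$ has vanishing Christoffel symbols and the metric is diagonal, $g = \sum_i h_i\,(dx^i)^2$ with each $h_i > 0$. Since $\nabla$ and $D$ are both torsion-free, their affine combination $\nabla^t$ is again torsion-free, so any torsion-free connection satisfying $\nabla^t g^t = 0$ is automatically the Levi-Civita connection of $g^t$; establishing local metrizability is therefore equivalent to producing a local positive-definite $g^t$ with $\nabla^t g^t = 0$. Because $\nabla$ is flat in these coordinates, the difference tensor $A = D - \nabla$ coincides with the Levi-Civita Christoffel symbols $\Gamma^k_{ij}$ of $g$, and hence $\nabla^t$ has Christoffel symbols exactly $t\,\Gamma^k_{ij}$.

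The candidate I would use is the geometric interpolation between the flat metric $g^0 = \sum_i (dx^i)^2$ (which is $\nabla$-parallel, settling $t=0$) and $g^1 = g$ (settling $t=1$), namely
\begin{equation}\label{eq:ansatz}
g^t = \sum_{i=1}^n h_i^{\,t}\,(dx^i)^2 .
\end{equation}
As the matrices involved are diagonal and therefore commute, \eqref{eq:ansatz} is just the matrix power $G^t$ of $G = \mathrm{diag}(h_1,\dots,h_n)$. With this ansatz the condition $\nabla^t g^t = 0$ unwinds to the component identities
\begin{equation}\label{eq:comp}
h_i^{\,t-1}(\partial_k h_i)\,\delta_{ij} = t\,\Gamma^j_{ki}\,h_j^{\,t} + t\,\Gamma^i_{kj}\,h_i^{\,t}, \qquad \forall\, i,j,k .
\end{equation}

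I would then verify \eqref{eq:comp} case by case, using the standard Christoffel symbols of a diagonal metric. The diagonal entries $i=j$ reduce, via $\Gamma^i_{ki} = \tfrac12\,\partial_k \ln h_i$, to the tautology $\partial_k h_i = \partial_k h_i$ and hence hold for every $t$ automatically; likewise the fully off-diagonal triples with $i,j,k$ distinct vanish on both sides since $\Gamma^k_{ij} = 0$ when all three indices differ. The only substantive case is $i \neq j$ with $k \in \{i,j\}$, where the right-hand side of \eqref{eq:comp} collapses to $\tfrac{t}{2}(\partial_j h_i)\big(h_i^{\,t-1} - h_j^{\,t-1}\big)$ while the left-hand side is zero. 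This cross-term is the crux, and it is exactly where the affine-orthogonal structure must enter: I expect to extract from the diagonal normal form that, for each pair $i\neq j$, either $\partial_j h_i = 0$ or $h_i = h_j$, so that the offending term dies identically in $t$.

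The main obstacle is precisely this last cancellation. The diagonal ansatz produces a purely diagonal left-hand side, whereas the scaled Christoffel symbols contribute genuinely off-diagonal pieces ($\Gamma^i_{jj}$ and $\Gamma^i_{ij}$ for $i\neq j$), and for an arbitrary diagonal metric these need not cancel at intermediate $t$. Resolving it — either by reading the needed cancellation off the definition of an affine-orthogonal manifold, or, should a non-diagonal $g^t$ be unavoidable, by integrating the first-order linear system $\nabla^t g^t = 0$ directly using the flatness of $\nabla$ — is the heart of the argument. The two running examples are instructive here: the conformally flat Hopf metric realizes the case $h_i = h_j$ for all $i,j$, while the separated torus metric realizes $\partial_j h_i = 0$ for $i \neq j$, and these are exactly the two mechanisms by which the obstruction vanishes. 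I would use them to guide the general cancellation.
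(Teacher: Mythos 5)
Your attempt follows the same route as the paper---the same power-law ansatz $g^t=\sum_i h_i^{\,t}(dx^i)^2$ and the same comparison of the Christoffel symbols $t\,\Gamma^k_{ij}$ of $\nabla^t$ with those of the Levi--Civita connection of $g^t$---but you carry out the one delicate case honestly, and that case is exactly where the argument breaks. The cancellation you defer (``for each pair $i\neq j$, either $\partial_j h_i=0$ or $h_i=h_j$'') is not a consequence of the definition of an affine-orthogonal manifold, and it cannot be: diagonality of $g$ in affine coordinates imposes no relation among the $h_i$. Take $g=e^{2x_2}(dx^1)^2+(dx^2)^2$ on $\mathbb{R}^2$ with the standard flat connection. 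This is affine-orthogonal, yet $\partial_2 h_1\neq 0$ and $h_1\neq h_2$, and one can check directly that $\nabla^t$ is not locally metric for $0<t<1$: its curvature evaluated on $(\partial_1,\partial_2)$ has matrix entries $R^1{}_2=-t^2$ and $R^2{}_1=(2t-t^2)e^{2x_2}$, so any candidate parallel metric $G$ must be diagonal with $G_{11}/G_{22}=(2-t)e^{2x_2}/t$ (skew-symmetry of the curvature with respect to $G$), while the parallelism equation $(\nabla^t_{\partial_1}G)(\partial_1,\partial_2)=0$ forces $G_{11}/G_{22}=e^{2x_2}$; these are compatible only at $t=1$. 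So the residual cross-term $\tfrac{t}{2}(\partial_j h_i)\bigl(h_i^{\,t-1}-h_j^{\,t-1}\bigr)$ you isolated is a genuine obstruction, and your proposal, as it stands, has a gap that no reading of the stated hypotheses can fill. (Incidentally, your displayed component equation is missing a factor of $t$ on the left-hand side, but your subsequent case analysis tacitly uses the correct equation.)

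You should know, however, that this is a defect of the theorem rather than of your execution. The paper's own proof passes over precisely this case because of a computational slip: in \eqref{eq:gamma-diag-mixed} the inverse-metric factor is written as $(g_i^t)^{-1}$, whereas the general formula \eqref{eq:christoffel-general} requires $(g^t)^{kk}=(g_k^t)^{-1}$. With the correct factor one gets $\Gamma^k_{ii}(g^t)=-\tfrac{t}{2}\,g_i^{\,t-1}\,\partial_k g_i/g_k^{\,t}$, while $t\,\Gamma^k_{ii}(D)=-\tfrac{t}{2}\,\partial_k g_i/g_k$, and these agree for all $t\in(0,1)$ only when $\partial_k g_i=0$ or $(g_i/g_k)^{t-1}=1$---exactly your dichotomy. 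The theorem is therefore valid only under additional hypotheses of the kind you extracted from the two examples: conformally flat diagonal metrics ($h_i=h_j$ for all $i,j$, as for the Hopf manifold) or product-type metrics ($\partial_j h_i=0$ for $i\neq j$, as for the torus family). In short, your blind attempt reproduces the paper's strategy, but by refusing to wave away the cross-terms it locates a real error in the published argument; the honest conclusion is that the statement needs a stronger hypothesis, not a cleverer proof.
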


\begin{proof}
Let $p \in M$ be arbitrary. By assumption, there exist local affine coordinates $(x_1,\dots,x_n)$ around $p$ such that the global metric $g$ is diagonal:
\begin{equation}\label{eq:metric-diagonal}
g = \sum_{i=1}^n g_i(x) \, dx_i \otimes dx_i, \qquad g_i(x) > 0.
\end{equation}

\medskip
\textbf{Step 1: Define the local metric family.}  
For $t \in [0,1]$, define
\begin{equation}\label{eq:gt-definition}
g^t = \sum_{i=1}^n g_i(x)^t \, dx_i \otimes dx_i.
\end{equation}
This is diagonal in the same coordinates, but the entries $g_i(x)^t$ now depend on $x$, so their partial derivatives may be nonzero in all directions.

\medskip
\textbf{Step 2: Christoffel symbols of $g^t$.}  
Let $D^t$ denote the Levi-Civita connection of $g^t$, and let $\Gamma^k_{ij}(g^t)$ be its Christoffel symbols. By the standard formula,
\begin{equation}\label{eq:christoffel-general}
\Gamma^k_{ij}(g^t) = \frac{1}{2} (g^t)^{kk} \left( \partial_i g^t_{jk} + \partial_j g^t_{ik} - \partial_k g^t_{ij} \right).
\end{equation}

Since $g^t$ is diagonal, $g^t_{ij} = 0$ for $i \neq j$, and $g^t_{ii} = g_i^t$. Then:

\begin{enumerate}
    \item \textbf{Off-diagonal Christoffel symbols:} For $i \neq j$,
    \begin{equation}\label{eq:gamma-offdiag-expanded}
    \Gamma^k_{ij}(g^t) = 
    \begin{cases}
    0 & \text{if } k \neq i,j, \\
    \frac{1}{2} (g_i^t)^{-1} \partial_j g_i^t =\frac{t}{2} \frac{\partial_j g_i}{g_i} &\text{if } k = i, \\
    \frac{1}{2} (g_j^t)^{-1} \partial_i g_j^t= \frac{t}{2} \frac{\partial_i g_j}{g_j} &\text{if } k = j.
    \end{cases}
    \end{equation}
    
    \item \textbf{Diagonal Christoffel symbols:} For $i=j$,
    \begin{equation}\label{eq:gamma-diag-expanded}
    \Gamma^i_{ii}(g^t) = \frac{1}{2} (g_i^t)^{-1} \partial_i g_i^t = \frac{t}{2} \frac{\partial_i g_i}{g_i}, 
    \end{equation}
    and for $k \neq i$,
    \begin{equation}\label{eq:gamma-diag-mixed}
    \Gamma^k_{ii}(g^t) = -\frac{1}{2} (g_i^t)^{-1} \partial_k g_i^t=-\frac{t}{2} \frac{\partial_k g_i}{g_i}.
    \end{equation}
\end{enumerate}

\medskip
\textbf{Step 3: Christoffel symbols of $\nabla^t$.}  
By definition,
\begin{equation}\label{eq:nabla-t-definition-proof2}
\nabla^t = (1-t)\nabla + t D,
\end{equation}
where $\nabla$ is the flat affine connection with all Christoffel symbols zero in the chosen affine coordinates, and $D$ is the Levi-Civita connection of $g$. Therefore, in these local coordinates, the Christoffel symbols of $\nabla^t$ are
\begin{equation}\label{eq:gamma-nabla-t-eq}
\Gamma^k_{ij}(\nabla^t) = t \, \Gamma^k_{ij}(D).
\end{equation}

Using equations \eqref{eq:gamma-offdiag-expanded}, \eqref{eq:gamma-diag-expanded}, and \eqref{eq:gamma-diag-mixed}, we see that every Christoffel symbol of $\nabla^t$ coincides exactly with the corresponding Christoffel symbol of the Levi-Civita connection of $g^t$. That is,
\begin{equation}\label{eq:gamma-nabla-t-equals-gamma-gt-final}
\Gamma^k_{ij}(\nabla^t) = \Gamma^k_{ij}(g^t), \quad \forall i,j,k.
\end{equation}

Hence, in these local coordinates, 
\begin{equation}\label{eq:nabla-t-metric-final2}
\nabla^t g^t = 0,
\end{equation}
which completes the argument that $\nabla^t$ is locally metric for all $t \in [0,1]$.

\end{proof}

\subsection*{Euler characteristic of affine-orthogonal manifolds}

\begin{theorem}\label{thm:euler-zero}
Let $M$ be a compact affine-orthogonal manifold. Then
\begin{equation}\label{eq:euler-zero}
\chi(M) = 0.
\end{equation}
\end{theorem}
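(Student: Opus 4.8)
The plan is to deduce the vanishing of $\chi(M)$ from the deformation constructed in the theorem just proved, together with Theorem~2.3 of \cite{cocos2025}, which asserts that a flat affine connection admitting a proper deformation into a Levi-Civita connection through locally metric connections forces the Euler characteristic to vanish. First I would dispose of the trivial case: if $\dim M = n$ is odd, then $\chi(M) = 0$ by Poincar\'e duality and there is nothing to prove. So assume $n = 2m$ is even.

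For even $n$ I would invoke the theorem just established, applied to the affine-orthogonal structure $(M,\nabla,g)$. It produces the one-parameter family $\nabla^t = (1-t)\nabla + tD$, with $\nabla^0 = \nabla$ flat and $\nabla^1 = D$ the global Levi-Civita connection of $g$, and it certifies that each $\nabla^t$ is locally metric, being compatible with the local metric $g^t = \sum_i g_i^t\, dx_i \otimes dx_i$. This is precisely the input required by Theorem~2.3 of \cite{cocos2025}: a smooth path of torsion-free, locally metric connections joining the flat affine connection to a genuine Riemannian connection. Feeding this path into that theorem yields $\chi(M) = 0$.

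To see why such a deformation forces vanishing, recall the Chern--Gauss--Bonnet theorem, which expresses $\chi(M) = \int_M \mathrm{Pf}(R_D/2\pi)$ as the integral of the Pfaffian of the curvature of the metric connection $D = \nabla^1$. Because each $\nabla^t$ is locally metric with respect to $g^t$, one obtains on each affine chart a local Euler form $\mathrm{Pf}_{g^t}(R_{\nabla^t}/2\pi)$. The heart of the matter is that these chartwise forms assemble into a globally defined closed $n$-form whose cohomology class is independent of $t$ and equals $e(TM)$. Evaluating at the flat endpoint $t=0$, the curvature $R_{\nabla^0} = R_\nabla$ vanishes identically, so the Euler form is zero as a form, not merely cohomologically; hence $e(TM) = 0$ in $H^n(M;\mathbb{R})$ and $\chi(M) = 0$.

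The main obstacle is exactly the gluing step hidden inside Theorem~2.3. The local metrics $g^t$ do not transform as tensors under affine coordinate changes when $0 < t < 1$, since raising the diagonal entries to the power $t$ is nonlinear; consequently it is not automatic that the local Pfaffian Euler forms agree on overlaps, nor that the standard transgression argument — usually run with a single fixed metric — survives when the compatible metric $g^t$ itself varies along the path. Verifying that the affine-orthogonal hypothesis supplies enough compatibility to make the Euler form globally well defined and to keep its class constant in $t$ is the delicate content of \cite[Theorem~2.3]{cocos2025}. In the present proof I would therefore confine myself to checking that the family $\nabla^t$ meets the precise regularity and compatibility conditions demanded by that theorem, and then quote it; as a sanity check, the Hopf manifold of Example~1.1, being diffeomorphic to $S^n \times S^1$, indeed has $\chi = 0$.
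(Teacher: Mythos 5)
Your proposal is correct and takes essentially the same route as the paper's own proof: both feed the family $\nabla^t$ with local metrics $g^t$ from the preceding theorem into Theorem~2.3 of \cite{cocos2025} as a proper deformation of the flat connection into the Levi-Civita connection, and conclude that the Euler class of $TM$, hence $\chi(M)$, vanishes. Your additional material (the odd-dimensional reduction, the Pfaffian/transgression heuristic for why the cited theorem holds, and the Hopf-manifold sanity check) is consistent but not part of the paper's argument, which simply quotes the cited theorem.
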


\begin{proof}
Construct the family $\nabla^t$ as in \eqref{eq:nabla-t} and the local metrics $g^t$ as in \eqref{eq:gt-definition}, so that $\nabla^t g^t = 0$. This produces a proper deformation of the flat connection $\nabla$ into the Levi-Civita connection $D$ in the sense of \cite[Definition 2.2]{cocos2025}. By \cite[Theorem 2.3]{cocos2025}, any vector bundle with a flat connection that admits a proper deformation into a metric connection has vanishing Euler class. Applying this to $TM$ gives \eqref{eq:euler-zero}.
\end{proof}

\section{Generalization: Quasi-Metric Connections}

The notion of affine-orthogonal manifolds can be generalized to a broader class of geometric structures on vector bundles. Let $E \to M$ be a smooth vector bundle over a manifold $M$, and let $\nabla$ be a connection on $E$.

\begin{definition}
A connection $\nabla$ on $E$ is called \emph{quasi-metric} if, for every point $p \in M$, there exists a local frame $(e_1, \dots, e_r)$ of $E$ near $p$ such that the curvature matrix $\Omega = (\Omega^i_j)$ of $\nabla$ in this frame is skew-symmetric:
\begin{equation}\label{eq:quasi-metric-curvature}
\Omega^T = -\Omega.
\end{equation}
\end{definition}

\begin{remark}
Affine-orthogonal manifolds are a special case of quasi-metric connections where $E = TM$, the connection is flat, and the curvature matrix vanishes identically. In this more general setting, the skew-symmetry condition \eqref{eq:quasi-metric-curvature} replaces the metric-compatibility condition.
\end{remark}

\begin{remark}
Quasi-metric connections naturally arise in the study of vector bundles with structure groups contained in orthogonal or symplectic groups, where the curvature preserves a bilinear or symplectic form. This framework allows for generalizations of Chern-type results, such as constraints on characteristic classes, to bundles that are not necessarily tangent bundles of manifolds.
\end{remark}

\begin{example}
Let $E \to M$ be a rank-$r$ vector bundle with a global frame $(e_1, \dots, e_r)$ and a connection $\nabla$ whose curvature matrix in this frame is
\[
\Omega = \begin{pmatrix}
0 & \omega_{12} & \cdots & \omega_{1r} \\
-\omega_{12} & 0 & \cdots & \omega_{2r} \\
\vdots & \vdots & \ddots & \vdots \\
-\omega_{1r} & -\omega_{2r} & \cdots & 0
\end{pmatrix}.
\]
Then $\nabla$ is a quasi-metric connection on $E$, even if $E$ is not the tangent bundle of a manifold.
\end{example}

This generalization shows that the ideas underlying affine-orthogonal manifolds—namely, compatibility of connections with local algebraic structures on frames—extend naturally to more general fiber bundles and provide a rich framework for studying curvature constraints and topological invariants.

We now extend the construction of the Euler form to quasi-metric connections, using Lemma 3.1 from \cite{cocos2025}.

\begin{theorem}\label{thm:euler-quasi-metric}
Let $E \to M$ be a smooth oriented vector bundle of rank $2m$, and let $\nabla$ be a quasi-metric connection on $E$, i.e., for every point $p \in M$ there exists a local frame in which the curvature matrix $\Omega$ of $\nabla$ is skew-symmetric. Then the Euler form of $\nabla$, defined locally via the Pfaffian of $\Omega$, is globally well-defined and closed. Consequently, the Euler class $e(E) \in H^{2m}(M,\mathbb{R})$ is well-defined.
\end{theorem}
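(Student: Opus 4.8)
The plan is to define the form locally by the normalized Pfaffian and then establish, as two separate points, the ``globally well-defined'' and ``closed'' assertions. On a patch carrying a local frame in which the curvature matrix $\Omega$ is skew-symmetric I would set $e=(2\pi)^{-m}\,\mathrm{Pf}(\Omega)$, a $2m$-form whose coefficient is the Pfaffian polynomial in the entries of $\Omega$; since $E$ is oriented I may restrict attention to positively oriented frames throughout (flipping a single frame vector conjugates $\Omega$ by a diagonal sign matrix and keeps it skew, so orienting costs nothing).

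First I would dispose of well-definedness on overlaps. If two admissible frames are related by a transition $g$, the curvatures are conjugate, $\Omega'=g^{-1}\Omega g$, with both skew. Imposing $(\Omega')^{T}=-\Omega'$ forces $gg^{T}$ to commute with $\Omega$. Taking the pointwise polar decomposition $g=OP$ with $O\in O(2m)$ and $P=(g^{T}g)^{1/2}>0$, this commutation reads $P^{2}(O^{T}\Omega O)=(O^{T}\Omega O)P^{2}$, so $P$ commutes with $O^{T}\Omega O$ by functional calculus, and the conjugation collapses to $\Omega'=O^{T}\Omega O$. Hence $\mathrm{Pf}(\Omega')=\det(O)\,\mathrm{Pf}(\Omega)=\mathrm{sign}(\det g)\,\mathrm{Pf}(\Omega)$, and orientation gives $\det g>0$, so the local forms agree and glue to a single global $2m$-form $e(\nabla)$.

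The hard part will be closedness, and it is the real content of the statement: because the connection form $\theta$ need not be skew-symmetric, the textbook Chern--Weil argument (Bianchi identity together with the infinitesimal $\mathfrak{so}(2m)$-invariance of $\mathrm{Pf}$) does not apply verbatim, as $\theta\notin\mathfrak{so}(2m)$. My plan is instead to prove $d\,e(\nabla)=0$ pointwise. Fix $p$; since $e(\nabla)$ is frame-independent by the previous step, I may compute in any admissible frame near $p$, and among orthogonal gauge changes (which preserve skewness of $\Omega$) I can arrange $\theta(p)$ to be symmetric: an orthogonal $g$ with $g(p)=I$ shifts $\theta(p)$ by $dg(p)$, which is skew, and this is exactly enough to cancel the skew part of $\theta(p)$. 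In such a frame the Bianchi identity $d\Omega=\Omega\wedge\theta-\theta\wedge\Omega$ shows that $d\Omega(p)$ is a \emph{symmetric} matrix of $2$-forms, and substituting a symmetric matrix for $d\Omega$ in the antisymmetrized Pfaffian sum yields a term odd under transposing the first two summation indices, which therefore cancels. This symmetry-versus-sign cancellation is the algebraic fact encapsulated in Lemma 3.1 of \cite{cocos2025}, which I would invoke to streamline the computation.

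Having produced a globally defined closed $2m$-form, I conclude that $e(E):=[e(\nabla)]\in H^{2m}(M,\mathbb{R})$ is a well-defined de Rham class. I expect the overlap/orientation step to be routine once the polar-decomposition identity is recorded; the genuine obstacle is closedness, where the point is that skewness of the \emph{curvature} alone---rather than metric-compatibility of the connection---already suffices, via the symmetric-frame reduction above.
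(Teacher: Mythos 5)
Your proposal is correct, and it is in fact more complete than the paper's own proof, so the comparison is worth spelling out. For well-definedness on overlaps, the paper simply cites Lemma 3.1 of \cite{cocos2025} as a black box: transition matrices between admissible oriented frames have positive determinant, curvatures are conjugate, and the lemma asserts the Pfaffian is unchanged under a positive-determinant conjugation that preserves skew-symmetry. Your polar-decomposition argument ($g = OP$, the commutation $gg^{T}\Omega = \Omega gg^{T}$ forced by skewness of both curvature matrices, functional calculus to pass from $P^{2}$ to $P$, hence $\Omega' = O^{T}\Omega O$ and $\mathrm{Pf}(\Omega') = \det(O)\,\mathrm{Pf}(\Omega)$) is essentially a self-contained proof of that cited lemma; what it buys is independence from the reference and an explicit identification of the residual gauge freedom as orthogonal. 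The larger difference is closedness: the paper's proof ends after the gluing step and offers no argument whatsoever that $d\,\mathrm{e}(\nabla) = 0$, even though closedness is asserted in the theorem statement and is needed for the Euler class to live in $H^{2m}(M,\mathbb{R})$. Your symmetric-frame reduction supplies exactly this missing step, and it is sound: an orthogonal gauge change with $g(p) = I$ shifts $\theta(p)$ by the arbitrary skew matrix $dg(p)$ while preserving skewness of $\Omega$ everywhere, so you may take $\theta(p)$ symmetric; the Bianchi identity $d\Omega = \Omega\wedge\theta - \theta\wedge\Omega$ then makes $d\Omega(p)$ symmetric, and substituting a symmetric matrix into the derivative of the antisymmetrized Pfaffian sum cancels pairwise under the transposition $\sigma \mapsto \sigma\circ(2k{-}1\;2k)$. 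One small correction: you attribute this last cancellation to Lemma 3.1 of \cite{cocos2025}, but as used in the paper that lemma is the conjugation-invariance statement for the Pfaffian (the well-definedness step), not the symmetric-substitution identity; your closedness argument stands on its own and should not lean on that citation.
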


\begin{proof}
Let $\{e_1,\dots,e_{2m}\}$ be a local frame of $E$ around a point $p \in M$ in which the curvature matrix $\Omega = (\Omega^i_j)$ of $\nabla$ is skew-symmetric. Following \cite[Section 3]{cocos2025}, we define the local Euler form as
\begin{equation}\label{eq:euler-local}
\mathrm{e}(\nabla) = \mathrm{Pf}(\Omega),
\end{equation}
where $\mathrm{Pf}$ denotes the Pfaffian.

Now consider another local frame $\{\tilde e_1, \dots, \tilde e_{2m}\}$ over an overlapping neighborhood in whichthe curvature matrix $\tilde{\Omega}$ is also skew symmetric.

This frame is related to the previous frame by a nonsingular matrix $A$ with positive determinant:
\begin{equation}\label{eq:frame-change}
\tilde e_i = \sum_{j} A_{ij} e_j, \qquad \det(A) > 0.
\end{equation}
Then the curvature matrices are conjugate:
\begin{equation}\label{eq:curvature-conjugate}
\tilde \Omega = A^{-1} \Omega A.
\end{equation}

By Lemma 3.1 of \cite{cocos2025}, if $A$ is a positive-determinant matrix and both $\Omega$ and $A^{-1} \Omega A$ are skew-symmetric, then
\begin{equation}
\mathrm{Pf}(\tilde \Omega) = \mathrm{Pf}(A^{-1} \Omega A) = \mathrm{Pf}(\Omega).
\end{equation}
Thus the local Euler forms agree on overlaps, and $\mathrm{e}(\nabla)$ defines a globally well-defined $2m$-form.

\end{proof}

\begin{remark}
If the rank of the vector bundle $E \to M$ equals the dimension of the base manifold $M$, then the Euler form $\mathrm{e}(\nabla)$ defines a cohomology class in $H^{\dim M}(M, \mathbb{R})$.
\end{remark}

Next, we provide an explicit example of a one parameter family of quasi-metric connections on the tangent bundle of $\mathbb{T}^2$ that are not even locally metric. For these connections, the Euler form does not correspond to a topological invariant of the bundle. It follows immediately that none of these connections can be properly deformed into a global Levi-Civita connection.

\subsection*{Explicit Quasi-Metric Connections with Non-Topological Euler Form on the 2-Torus}

Let $T^2$ be the 2--torus with global coframe $\{\theta^1,\theta^2\}$, where 
\begin{equation}\label{eq:dtheta}
d\theta^1 = d\theta^2 = 0.
\end{equation}
Define a connection $\nabla$ on $T^2$ by its connection 1--form matrix
\begin{equation}\label{eq:omega}
\omega = 
\begin{pmatrix}
\omega^1{}_1 & \omega^1{}_2 \\[6pt]
\omega^2{}_1 & \omega^2{}_2
\end{pmatrix}
=
\begin{pmatrix}
k \, \theta^2 & k \, \theta^1 \\[6pt]
k \, \theta^1 & 0
\end{pmatrix}.
\end{equation}

\subsection*{Torsion}
The torsion 2--forms are defined by
\begin{equation}\label{eq:torsion-def}
T^i = d\theta^i + \omega^i{}_j \wedge \theta^j.
\end{equation}
For $i=1$,
\begin{equation}\label{eq:T1}
\begin{aligned}
T^1 &= d\theta^1 + \omega^1{}_1 \wedge \theta^1 + \omega^1{}_2 \wedge \theta^2 \\
&= 0 + (k\theta^2 \wedge \theta^1) + (k\theta^1 \wedge \theta^2) \\
&= k(-\theta^1 \wedge \theta^2) + k(\theta^1 \wedge \theta^2) = 0.
\end{aligned}
\end{equation}
For $i=2$,
\begin{equation}\label{eq:T2}
\begin{aligned}
T^2 &= d\theta^2 + \omega^2{}_1 \wedge \theta^1 + \omega^2{}_2 \wedge \theta^2 \\
&= 0 + k \theta^1 \wedge \theta^1 + 0 = 0.
\end{aligned}
\end{equation}
Thus the connection is torsionless.

\subsection*{Curvature}
The curvature 2--form matrix is
\begin{equation}\label{eq:curvature-def}
\Omega = d\omega + \omega \wedge \omega.
\end{equation}
Using \eqref{eq:dtheta}, we have $d\omega = 0$, so
\begin{equation}\label{eq:curvature}
\Omega = \omega \wedge \omega =
\begin{pmatrix}
0 & -k^2\,\theta^1 \wedge \theta^2 \\[6pt]
k^2\,\theta^1 \wedge \theta^2 & 0
\end{pmatrix}.
\end{equation}
Equivalently,
\begin{equation}\label{eq:curvature-matrix-form}
\Omega = k^2 \, \theta^1 \wedge \theta^2 \,
\begin{pmatrix}
0 & -1 \\[6pt]
1 & 0
\end{pmatrix}.
\end{equation}

\subsection*{Pfaffian and Integral}
For a $2\times 2$ skew--symmetric matrix 
$\begin{pmatrix} 0 & a \\ -a & 0 \end{pmatrix}$,
the Pfaffian equals $a$. Hence,
\begin{equation}\label{eq:pfaffian}
\operatorname{Pf}(\Omega) = -k^2 \, \theta^1 \wedge \theta^2.
\end{equation}
Therefore,
\begin{equation}\label{eq:pfaffian-integral}
\int_{T^2} \operatorname{Pf}(\Omega) 
= -k^2 \int_{T^2} \theta^1 \wedge \theta^2=-k^2(2\pi)^2.
\end{equation}

These affine connections on $T^2$ are torsion-free but not Levi–Civita for any global Riemannian metric. In fact, a genuine Levi–Civita connection on the torus must have vanishing Euler class, since the Gauss–Bonnet theorem forces the integral of its curvature (and hence of the Pfaffian) to be zero. By contrast, for the family $\eqref{eq:omega}$ we computed that
$$\int_{T^2} \mathrm{Pf}(\Omega) = -k^2 (2\pi)^2 \neq 0$$ whenever $k \neq 0$. Thus no proper deformation of this connection through torsion-free connections can yield a Levi–Civita connection, because the Euler class is a topological invariant that remains constant under deformation. In this sense, the nontrivial Pfaffian integral provides a global obstruction to metricity. The following statement shows that in fact this connections are not even locally metric:

\begin{theorem}[Non-local-metrizability of the torus connection]
Let $T^2$ be the 2-torus with global coframe $\{\theta^1,\theta^2\}$, and consider the connection $\nabla$ with connection 1--form matrix
\begin{equation}\label{eq:connection-nabla}
\omega = 
\begin{pmatrix}
k \, \theta^2 & k \, \theta^1 \\[2mm]
k \, \theta^1 & 0
\end{pmatrix}.
\end{equation}
Then $\nabla$ is not locally metric in any neighborhood of $T^2$.
\end{theorem}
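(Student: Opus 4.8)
The plan is to show that the first-order system expressing metric compatibility admits no positive-definite solution. I would work in the global frame dual to $\{\theta^1,\theta^2\}$ and denote by $G=(g_{ij})$ the matrix of a hypothetical local parallel metric. With the convention $\nabla e_j=\omega^i{}_j e_i$, the condition $\nabla g=0$ is equivalent to the matrix equation
\begin{equation}
dG=\omega^{T}G+G\omega .
\end{equation}
Because the connection form in \eqref{eq:connection-nabla} is symmetric, $\omega^{T}=\omega$, so this reads $dG=\omega G+G\omega$; the first step is to record its three scalar components for $g_{11},g_{12},g_{22}$ by expanding the anticommutator.

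The conceptual shortcut is to invoke the curvature first. Any parallel metric forces the holonomy, and hence every curvature endomorphism, to lie in $\mathfrak{so}(G)$, i.e.
\begin{equation}
\Omega^{T}G+G\Omega=0 .
\end{equation}
Substituting the curvature computed in \eqref{eq:curvature-matrix-form}, namely $\Omega=k^{2}\,\theta^{1}\wedge\theta^{2}\,J$ with $J=\left(\begin{smallmatrix}0&-1\\1&0\end{smallmatrix}\right)$, this collapses for $k\neq0$ to the purely algebraic relation $GJ=JG$. The symmetric matrices commuting with $J$ are exactly the scalar multiples of the identity, so a parallel metric would have to be conformal to the frame metric: $G=a\,I$ with $a>0$ a function.

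The final step feeds this back into the first-order system. With $G=aI$, the off-diagonal component of $dG=\omega G+G\omega$ reads $dg_{12}=k\theta^{2}g_{12}+k(g_{11}+g_{22})\theta^{1}$; since $g_{12}\equiv0$ and $g_{11}=g_{22}=a$, this becomes $0=2ak\,\theta^{1}$, which is impossible because $a>0$, $k\neq0$, and $\theta^{1}$ is a nonvanishing element of the coframe. Hence no positive-definite parallel $G$ exists in any neighborhood. As a self-contained alternative that avoids the holonomy lemma, I can instead read off the partial derivatives directly from $dG=\omega G+G\omega$ and impose equality of mixed partials on $g_{11}$: this already forces $g_{12}\equiv0$, after which the $g_{12}$-equation gives $g_{11}+g_{22}=0$, contradicting positive-definiteness.

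The main obstacle is bookkeeping rather than conceptual: I must fix the transpose/sign convention in $\nabla g=0$ correctly, since the symmetry $\omega^{T}=\omega$ is precisely what keeps the off-diagonal obstruction alive, and I must justify passing from ``locally metric'' to solvability of the first-order Frobenius system, i.e.\ that a local parallel section of $\mathrm{Sym}^{2}T^{*}M$ exists iff this linear PDE is integrable. The positivity hypothesis enters only at the very end, but it is essential: the algebraic curvature condition alone is satisfied by $G=aI$, so metricity is obstructed not by any failure of the curvature to be skew-symmetric, but by the incompatibility of the transport equations for the conformal factor.
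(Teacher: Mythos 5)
Your proof is correct, but it takes a genuinely different route from the paper. The paper disposes of the theorem in two lines by citing an external result (Theorem 2.1 of the author's earlier note on locally metric connections): a locally metric connection on a plane bundle over a surface must have a skew-symmetric connection matrix in any frame where its curvature matrix is skew-symmetric; since $\Omega$ is skew in the given frame but $\omega$ is not, the connection cannot be locally metric. You instead prove the needed obstruction from scratch: the compatibility equation $dG=\omega^{T}G+G\omega$, the algebraic curvature constraint $\Omega^{T}G+G\Omega=0$ forcing $G=aI$ (since the only symmetric matrices commuting with $J$ are multiples of the identity), and then the off-diagonal transport equation $0=2ak\,\theta^{1}$ yielding the contradiction. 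In effect you reprove, in this special case, exactly the lemma the paper imports, so your argument is self-contained where the paper's is not; your alternative ending (equality of mixed partials forces $g_{12}\equiv 0$, then $g_{11}+g_{22}=0$) is also checked correct and avoids even the holonomy-style lemma. One remark: your stated worry about justifying ``locally metric $\Leftrightarrow$ solvability of the Frobenius system'' is a non-issue. You are not constructing a parallel metric, only deriving a contradiction from an assumed one, and the equation $dG=\omega^{T}G+G\omega$ is nothing more than the componentwise expression of $\nabla g=0$ in the given frame, so no integrability theory is required.
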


\begin{proof}
By Theorem 2.1 in \cite{cocos2016},  if a connection $D$ on a plane bundle over a surface is locally metric then in any frame in which the curvature matrix is skew its connection matrix has to be skew as well.
For the connection \eqref{eq:connection-nabla}, the curvature matrix is
\begin{equation}\label{eq:curvature-nabla}
\Omega = \omega \wedge \omega =
\begin{pmatrix}
0 & -k^2 \, \theta^1 \wedge \theta^2 \\[1mm]
k^2 \, \theta^1 \wedge \theta^2 & 0
\end{pmatrix},
\end{equation}
which is skew-symmetric. However, the connection matrix \eqref{eq:connection-nabla} in the global frame $\{\theta^1,\theta^2\}$ is
\begin{equation}\label{eq:omega-non-skew}
\omega = 
\begin{pmatrix}
k \, \theta^2 & k \, \theta^1 \\[1mm]
k \, \theta^1 & 0
\end{pmatrix},
\end{equation}
which is \emph{not} skew-symmetric ($k\neq 0$):
\begin{equation}\label{eq:omega-non-skew-check}
\omega + \omega^T = 
\begin{pmatrix}
2 k \, \theta^2 & 2 k \, \theta^1 \\[1mm]
2 k \, \theta^1 & 0
\end{pmatrix} \neq 0.
\end{equation}

Consequently, $\nabla$ is not locally metric anywhere on $T^2$.
\end{proof}

\end{document}